\DeclareMathAlphabet{\mathpzc}{OT1}{pzc}{m}{it}
\providecommand{\empmeasure}{{\ensuremath{{\lambda}}}}
\providecommand{\probspace}{\mathbb P}
\providecommand{\driftA}{\mathpzc A}
\providecommand{\volB}{\mathpzc B}
\providecommand{\prob}[1]{{\ensuremath{\mathrm{P}}\mspace{-2mu}\left[#1\right]}}%
\providecommand{\E}[1]{{\ensuremath{\mathrm{E}}\mspace{-2mu}\en*[]{#1}}}%
\providecommand{\var}[1]{{\ensuremath{\mathrm{Var}}\mspace{-2mu}\left[#1\right]}}%
\providecommand{\cov}[1]{{\ensuremath{\mathrm{Cov}}\mspace{-2mu}\left[#1\right]}}%
\providecommand{\tol}{\mathrm{TOL}}
\providecommand{\rset}{\mathbb{R}}
\providecommand{\nset}{\mathbb{N}}
\providecommand{\Order}[1]{ {\ensuremath{ \mathcal O\left( #1
      \right)}} }  
\providecommand{\mykeywords}[1]{\par \textbf{Keywords:} #1}
\providecommand{\subclass}[1]{\par{ \bf Class:} #1}
\renewcommand{\vec}[1]{{\ensuremath{{\boldsymbol #1}}}}
\providecommand{\valpha}{{{\ensuremath{\vec{\alpha}}}}}
\def\Tiny{\fontsize{4pt}{4pt}\selectfont}
\newcommand*{\MyDef}{\textnormal{\Tiny def}}
\newcommand*{\eqdefU}{\ensuremath{\mathop{\overset{\MyDef}{=}}}}%
\newcommand*{\eqdef}{\hskip 0.03cm\mathop{\overset{\MyDef}{\resizebox{\widthof{\eqdefU}}{\heightof{=}}{=}}}\hskip 0.03cm}
\providecommand{\di}{{\ensuremath{ \mathrm{d} }}}
\providecommand{\sst}{{\ensuremath{s_{\textnormal{t}}}}}
\providecommand{\betat}{{\ensuremath{\beta_{\textnormal{t}}}}}
\providecommand{\ssp}{{\ensuremath{s_{\textnormal{p}}}}}
\providecommand{\ggp}{{\ensuremath{\gamma_{\textnormal{p}}}}}
\providecommand{\betap}{{\ensuremath{\beta_{\textnormal{p}}}}}
\providecommand{\work}[1]{\textnormal{Work}\left[#1\right]}
\title{Multilevel and Multi-index Monte Carlo
  methods for the McKean-Vlasov equation}
\author{
  Abdul-Lateef~Haji-Ali
  \and
  Ra\'{u}l~Tempone
}
\declaretheorem[numberwithin=section]{theorem}
\declaretheorem[name=Example]{example*}
\declaretheorem[sharenumber=theorem, name=Lemma]{lemma}
\declaretheorem[name=Kuramoto Example,numbered=no]{kuramoto}
\begin{document}
\pagestyle{myheadings} \thispagestyle{plain} \markboth{Monte Carlo
  methods for stochastic particle systems in the mean-field}{Monte
  Carlo methods for stochastic particle systems in the mean-field}

\maketitle

\begin{abstract}
  We address the approximation of functionals depending on a system of
  particles, described by stochastic differential equations (SDEs), in
  the mean-field limit when the number of particles approaches
  infinity. This problem is equivalent to estimating the weak solution
  of the limiting McKean-Vlasov SDE. To that end, our approach uses
  systems with finite numbers of particles and a time-stepping
  scheme. In this case, there are two discretization parameters: the
  number of time steps and the number of particles.
  Based on these two parameters, we consider different variants of the
  Monte Carlo and Multilevel Monte Carlo (MLMC) methods and show that,
  in the best case, the optimal work complexity of MLMC, to estimate
  the functional in one typical setting with an error tolerance of
  $\tol$, is $\Order{\tol^{-3}}$.  We also consider a method that uses
  the recent Multi-index Monte Carlo method and show an improved work
  complexity in the same typical setting of
  $\Order{\tol^{-2}\log(\tol^{-1})^2}$. Our numerical experiments are
  carried out on the so-called Kuramoto model, a system of coupled
  oscillators.  \mykeywords{Multi-index Monte Carlo, Multilevel Monte
    Carlo, Monte Carlo, Particle systems, McKean-Vlasov, Mean-field,
    Stochastic Differential Equations, Weak Approximation, Sparse
    Approximation, Combination technique} \subclass{65C05 (Monte Carlo
    methods), 65C30 (Stochastic differential and integral equations),
    65C35 (Stochastic particle methods)}
\end{abstract}

\section{Introduction}\label{s:intro}
In our setting, a stochastic particle system is a system of
\emph{coupled} $d$-dimensional stochastic differential equations
(SDEs), each modeling the state of a ``particle''. Such particle
systems are versatile tools that can be used to model the dynamics of
various complicated phenomena using relatively simple interactions,
e.g., pedestrian dynamics \cite{helbing1995social, hajiali:msthesis},
collective animal behavior \cite{erban2015cucker,
  erban2011individual}, interactions between cells
\cite{dobramysl2015particle} and in some numerical methods such as
Ensemble Kalman filters \cite{del2016stability}.
One
common goal of the simulation of these particle systems is to average
some quantity of interest computed on \emph{all} particles, e.g., the
average velocity, average exit time or average number of particles in
a specific region.

Under certain conditions, most importantly the exchangeability of
particles and sufficient regularity of the SDE coefficients, the
stochastic particle system approaches a mean-field limit as the number
of particles tends to infinity
\cite{sznitman1991topics}. Exchangeability of particles refers to the
assumption that all permutations of the particles have the same joint
distribution.
In the mean-field limit, each particle follows a single McKean-Vlasov
SDE where the advection and/or diffusion coefficients depend on the
distribution of the solution to the SDE \cite{gartner:mckean}.  In
many cases, the objective is to approximate the expected value of a
quantity of interest (QoI) in the mean-field limit as the number of
particles tend to infinity, subject to some error tolerance, $\tol$.
While it is possible to approximate the expectation of these QoIs by
estimating the solution to a nonlinear PDE using traditional numerical
methods, such methods usually suffer from the curse of
dimensionality. Indeed, the cost of these method is usually of
$\Order{\tol^{-\frac{w}{d}}}$ for some constant $w>1$ that depends on
the particular numerical method. Using sparse numerical methods
alleviates the curse of dimensionality but requires increasing
regularity as the dimensionality of the state space increases.  On the
other hand, Monte Carlo methods do not suffer from this curse with
respect to the dimensionality of the state space.  This work explores
different variants and extensions of the Monte Carlo method when the
underlying stochastic particle system satisfies certain crucial
assumptions.
We theoretically show the validity of some of these assumptions in a
somewhat general setting, while verifying the other assumptions
numerically on a simple stochastic particle system, leaving further
theoretical justification to a future work.

Generally, the SDEs that constitute a stochastic particle system
cannot be solved exactly and their solution must instead be
approximated using a time-stepping scheme with a number of time steps,
$N$. This approximation parameter and a finite number of particles,
$P$, are the two approximation parameters that are involved in
approximating a finite average of the QoI computed for all particles
in the system. Then, to approximate the expectation of this average,
we use a Monte Carlo method. In such a method, multiple
\emph{independent} and \emph{identical} stochastic particle systems,
approximated with the same number of time steps, $N$, are simulated
and the average QoI is computed from each and an overall average is
then taken. Using this method, a reduction of the variance of the
estimator is achieved by increasing the number of simulations of the
stochastic particle system or increasing the number of particles in
the system.  Section~\ref{ss:mc} presents the Monte Carlo method more
precisely in the setting of stochastic particle systems.  Particle
methods that are not based on Monte Carlo were also discussed in
\cite{bossy1996convergence, bossy1997stochastic}.  In these methods, a
\emph{single} simulation of the stochastic particle system is carried
out and only the number of particles is increased to reduce the
variance.

As an improvement of Monte Carlo methods, the Multilevel Monte Carlo
(MLMC) method was first introduced in \cite{heinrich:MLMC} for
parametric integration and in \cite{giles:MLMC} for SDEs; see
\cite{giles:acta} and references therein for an overview. MLMC
improves the efficiency of the Monte Carlo method when only an
approximation, controlled with a single discretization parameter, of
the solution to the underlying system can be computed.  The basic idea
is to reduce the number of required samples on the finest, most
accurate but most expensive discretization, by reducing the variability
of this approximation with a \emph{correlated} coarser and cheaper
discretization as a control variate. More details are given in
Section~\ref{ss:mlmc} for the case of stochastic particle systems. The
application of MLMC to particle systems has been investigated in many
works \cite{bujok2013multilevel,hajiali:msthesis,
  rosin2014multilevel}. The same concepts have also been applied to
nested expectations \cite{giles:acta}.
More recently, a particle method applying the MLMC methodology to
stochastic particle systems was also introduced in
\cite{ricketson:mlmcparticle} achieving, for a linear system with a
diffusion coefficient that is independent of the state variable, a
work complexity of $\Order{\tol^{-2} \p{\log\p{\tol^{-1}}}^5}$.

Recently, the Multi-index Monte Carlo (MIMC) method
\cite{abdullatif.etal:MultiIndexMC} was introduced to tackle high
dimensional problems with more than one discretization parameter. MIMC
is based on the same concepts as MLMC and improves the efficiency of
MLMC even further but requires mixed regularity with respect to the
discretization parameters. More details are given in
Section~\ref{ss:mimc} for the case of stochastic particle systems.  In
that section, we demonstrate the improved work complexity of MIMC
compared with the work complexity of MC and MLMC, when applied to a
stochastic particle system. More specifically, we show that, when
using a naive simulation method for the particle system with quadratic
complexity, the optimal work complexity of MIMC is
$\Order{\tol^{-2}\log\p*{\tol^{-1}}^2}$ when using the Milstein
time-stepping scheme and $\Order{\tol^{-2}\log\p*{\tol^{-1}}^4}$ when
using the Euler-Maruyama time-stepping scheme.  Finally, in
Section~\ref{s:res}, we provide numerical verification for the
assumptions that are made throughout the current work and the derived
rates of the work complexity.

In what follows, the notation $a \lesssim b$ means that there
exists a constant $c$ that is independent of $a$ and $b$ such that
$a < cb$.

\section{Problem Setting}\label{sec:prob-set}
Consider a system of $P$ exchangeable stochastic differential equations (SDEs) where for
$p = 1 \ldots P$, we have the following equation for $X_{p|P}(t) \in \rset^d$
\begin{equation}
\label{eq:particle-sys}
\en*\{.{\aligned \di X_{p|P}(t) &= \driftA\op*{t, X_{p|P}(t),
    \empmeasure_{\vec X_{P}(t)}} \di t +
  \volB\op*{t, X_{p|P}(t), \empmeasure_{\vec X_{P}(t)}}
  \di W_{p}(t) \\
  X_{p|P}(0) &= x^0_{p} \endaligned}
\end{equation}
where $\vec X_{P}(t) = \en\{\}{X_{q|P}(t)}_{q=1}^P$ and for some
(possibly stochastic) functions,
$\driftA\::\: [0, \infty) \times \rset^d \times \probspace(\rset^d)
\to \rset^d$ and
$\volB\::\: [0, \infty) \times \rset^d \times \probspace(\rset^d) \to
\rset^d \times\rset^d$ and $\probspace(\rset^d)$ is the space of
probability measures over $\rset^d$. Moreover,
\[
\empmeasure_{\vec X_{P}(t)} \eqdef \frac{1}{P}\sum_{q=1}^P
\delta_{X_{q|P}(t)} \in \probspace(\rset^d),
\]
where $\delta$ is the Dirac measure, is called the empirical measure.  In this setting,
$\en*\{\}{W_{p}}_{p\geq 1}$ are mutually independent $d$-dimensional Wiener
processes. If, moreover, $\{x^0_p\}_{p \geq 1}$ are i.i.d., then under certain conditions on the
smoothness and form of $\driftA$ and $\volB $ \cite{sznitman1991topics}, as
$P \to \infty$ for any $p \in \nset$, the $X_{p|\infty}$ stochastic process satisfies
\begin{equation}
\label{eq:particle-sys-meanfield}
\en*\{.{\aligned \di X_{p|\infty}(t) &= \driftA(t, X_{p|\infty}(t), \mu_\infty^t) \di t +
  \volB(t, X_{p|\infty}(t), \mu_\infty^t) \di W_{p}(t) \\
  X_{p|\infty}(0) &= x^0_{p}, \endaligned}
\end{equation}
where $\mu_\infty^t \in \probspace(\rset^d)$ is the corresponding
mean-field measure.  Under some smoothness and boundedness conditions
on $\driftA$ and $\volB$, the measure $\mu_\infty^t$ induces a
probability density function (pdf), $\rho_\infty(t, \cdot)$, that is
the Radon-Nikodym derivative with respect to the Lebesgue measure.
Moreover, $\rho_\infty$ satisfies the McKean-Vlasov equation
\[
  \frac{\partial \rho_\infty(t, x)}{\partial t} + \sum_{p=1}^P
  \frac{\partial}{\partial x_p}\p{\driftA(t, x_p, \mu_\infty^t)
    \rho_\infty(t, x)} - \frac{1}{2}\sum_{p=1}^P
  \frac{\partial^2}{\partial x_p^2} \p*{ \volB(t, x_p,
    \mu_\infty^t)^2 \rho_\infty(t, x) }= 0
\]
on $t \in [0, \infty)$ and $x = \p{x_p}_{p=1}^P \in \rset^d$ with
$\rho_\infty(0, \cdot)$ being the pdf of $x_p^0$ which is given and is
independent of $p$.  Due to \eqref{eq:particle-sys-meanfield} and
$x_p^0$ being i.i.d, $\{X_{p|\infty}\}_p$ are also i.i.d.; hence,
unless we want to emphasize the particular path, we drop the
$p$-dependence in $X_{p|\infty}$ and refer to the random process
$X_\infty$ instead. In any case, we are interested in computing
$\E{\psi\op*{X_{\infty}(T)}}$ for some given function, $\psi$, and
some final time, $T < \infty$.
\begin{kuramoto}[Fully connected Kuramoto model for synchronized
  oscillators] Throughout this work, we focus on a
  simple, one-dimensional example of \eqref{eq:particle-sys}.
For $p = 1, 2,\ldots, P$, we seek $X_{p|P}(t) \in \rset$ that satisfies
\begin{equation}\label{eq:kuramoto-system}
    \aligned
    \textnormal{d}X_{p|P}(t) &= \op*{\vartheta_p +
      \frac{1}{P}\sum_{q=1}^P \sin\op*{X_{p|P}(t)-X_{q|P}(t)}}
    \textnormal{d}t + \sigma \textnormal{d}W_{p}(t)\\X_{p|P}(0) &= x_{p}^0,
\endaligned
\end{equation}
where $\sigma \in \rset$ is a constant and $\{\vartheta_p\}_p$ are
i.i.d. and independent from the set of i.i.d. random variables
$\{x_p^0\}_p$ and the Wiener processes $\en\{\}{W_p}_{p}$. The
limiting SDE as $P \to \infty$ is
\begin{equation*}
  \aligned
  \textnormal{d}X_{p|\infty}(t) &= \op*{\vartheta_p +
    \int_{-\infty}^\infty \sin\op*{X_{p|\infty}(t)-y}
    \di\mu_{\infty}^t(y) }
  \textnormal{d}t + \sigma \textnormal{d}W_{p}(t)\\X_{p|\infty}(0) &= x_{p}^0.
\endaligned
\end{equation*}
Note that in terms of the generic system \eqref{eq:particle-sys} we
have
\[
  \driftA(t, x, \mu) = \vartheta + \int_{-\infty}^{\infty}\sin(x-y)
  \di \mu(y)
\]
with $\vartheta$ a random variable and $\volB = \sigma$ is a constant.
We are interested in
\[ \textnormal{Total synchronization} =
  \op*{\E{\cos(X_{\infty}(T))}}^2 +
  \op*{\E{\sin(X_{\infty}(T))}}^2, \] a real number between zero and
one that measures the level of synchronization in the system with an
infinite number of oscillators~\cite{acebron:kuramoto}; with zero
corresponding to total disorder. In this case, we need two estimators:
one where we take $\psi(\cdot) = \sin(\cdot)$ and the other where we
take $\psi(\cdot) = \cos(\cdot)$.
\end{kuramoto}

While it is computationally efficient to approximate
$\E{\psi(X_\infty(T))}$ by solving the McKean-Vlasov PDE, that
$\rho_\infty$ satisfies, when the state dimensionality, $d$, is small
(cf., e.g., \cite{hajiali:msthesis}), the cost of a standard full
tensor approximation increases exponentially as the dimensionality of
the state space increases.
On the other hand, using sparse approximation
techniques to solve the PDE requires increasing regularity assumptions
as the dimensionality of the
state space increases.
Instead, in this work, we focus on
approximating the value of $\E{\psi\op{X_\infty}}$ by simulating the
SDE system in \eqref{eq:particle-sys}. Let us now define
\begin{equation} \label{eq:phi-P}
\phi_P \eqdef  \frac{1}{P} \sum_{p=1}^P
\psi\op*{X_{p|P}(T)}.
\end{equation}
Here, due to exchangeability, $\{X_{p|P}(T)\}_{p=1}^P$ are identically
distributed but they are not independent since they are taken from the
same realization of the particle system. Nevertheless, we have
$\E{\phi_P} = \E{\psi(X_{p|P}(T))}$ for any $p$ and $P$.
In this case, with respect to the number of particles, $P$, the cost
of a naive calculation of $\phi_P$ is $\Order{P^2}$ due to the cost of
evaluating the empirical measure in \eqref{eq:particle-sys} for every
particle in the system. It is possible to take $\{X_{p|P}\}_{p=1}^P$
in \eqref{eq:phi-P} as i.i.d., i.e., for each $p=1\ldots P$, $X_{p|P}$
is taken from a different independent realization of the system
\eqref{eq:particle-sys}. In this case, the usual law of large numbers
applies, but the cost of a naive calculation of $\phi_P$ is
$\Order{P^3}$. For this reason, we focus in this work on the former
method of taking identically distributed but not independent
$\{X_{p|P}\}_{p=1}^P$.

Following the setup in
\cite{abdullatif:continuationMLMC,abdullatif:meshMLMC}, our objective
is to build a random estimator, $\mathcal{A}$, approximating
$\phi_\infty \eqdef \E{\psi(X_\infty(T))}$ with minimal work, i.e., we wish to satisfy the
constraint
  \begin{equation}\label{eq:full-stat-const}
    \prob{ |\mathcal{A} - \phi_\infty| \geq \tol }\
    \le \epsilon
  \end{equation}
  for a given error tolerance, $\tol$, and a given confidence level determined
  by $0<\epsilon \ll 1$.  We instead impose the following, more
  restrictive, two constraints:
  \begin{align}
    \label{eq:bias-const}\text{\bf Bias constraint:} & &|\E{\mathcal{A}-\phi_\infty}| \leq (1-\theta){\tol}, \\
    \label{eq:stat-const}\text{\bf Statistical constraint:}& &\prob
                                                 {|\mathcal{A} - \E{\mathcal{A}}| \geq \theta \tol} \le \epsilon,
  \end{align}
  for a given tolerance splitting parameter, $\theta \in (0,1)$,
  possibly a function of $\tol$. To show that these bounds are
  sufficient note that
    \[\aligned
    \prob{\abs{\mathcal A - \phi_\infty} \geq \tol} &\leq
    \prob{\abs{\mathcal A - \E{\mathcal A }} + \abs{\E{\mathcal A} -
        \phi_\infty} \geq \tol}
    \endaligned\]
  imposing \eqref{eq:bias-const}, yields
  \[ \prob{\abs{\mathcal A - \phi_\infty} \geq \tol} \leq
    \prob{\abs{\mathcal A - \E{\mathcal A }} \geq \theta \tol} \] then
  imposing \eqref{eq:stat-const} gives \eqref{eq:full-stat-const}.
  Next, we can use Markov inequality and impose
  ${\var{\mathcal{A}}} \leq \epsilon (\theta\tol)^2$ to satisfy
  \eqref{eq:stat-const}. However, by assuming (at least asymptotic)
  normality of the estimator, $\mathcal{A}$ we can get a less
  stringent condition on the variance as follows:
  \begin{align}
    \label{eq:var-const} \text{\bf Variance constraint:}& &\var{\mathcal{A}} \leq
                                       \op*{ \frac{\theta \tol}{C_\epsilon} }^2.
  \end{align}
  Here, $0<C_\epsilon$ is such that
  $\Phi(C_\epsilon) = 1 - \frac{\epsilon}{2}$, where $\Phi$ is the
  cumulative distribution function of a standard normal random
  variable, e.g., $C_\epsilon \approx 1.96$ for $\epsilon = 0.05$.
  The asymptotic normality of the estimator is usually shown using
  some form of the Central Limit Theorem (CLT) or the Lindeberg-Feller
  theorem (see, e.g., \cite{abdullatif:continuationMLMC,
    abdullatif.etal:MultiIndexMC} for CLT results for the MLMC and
  MIMC estimators and Figure~\ref{fig:error-vs-tol}-\emph{right}).

  As previously mentioned, we wish to approximate the values of
  $X_\infty$ by using \eqref{eq:particle-sys} with a finite number of
  particles, $P$.  For a given number of particles, $P$, a solution to
  \eqref{eq:particle-sys} is not readily available. Instead, we have
  to discretize the system of SDEs using, for example, the
  Euler-Maruyama time-stepping scheme with $N$ time steps. For
  $n = 0,1, 2, \ldots N-1$,
  \[ \aligned X^{n+1|N}_{p|P} - X^{n|N}_{p|P} &= \driftA\op*{X^{n|N}_{p|P}, \empmeasure_{\vec X^{n|N}_{P}}}
    \frac{T}{N} + \volB\op*{X^{n|N}_{p|P}, \empmeasure_{\vec
        X^{n|N}_{P}}}
    \Delta W_{p}^{n|N} \\
    X^{0|N}_{p|P} &= x^0_{p}, \endaligned \] where
  $\vec X_{P}^{n|N} = \{X_{p|P}^{n|N}\}_{p=1}^P$ and
  $\Delta W_{p}^{n|N} \sim \mathcal N\op*{0, \frac{T}{N}}$ are i.i.d.
  For the remainder of this work, we use the notation
\[ \phi_P^N \eqdef \frac{1}{P} \sum_{p=1}^P
  \psi\op*{X_{p|P}^{N|N}}. \]
At this point, we make the following assumptions:
\begin{align}
\label{eq:P-bias-assumption} \tag{\textbf{P1}}  \abs*{\E{\phi_P^N -
  \psi(X_{\infty})}} \leq \abs*{\E{\psi(X_{\cdot | P}^{N|N}) - \psi(X_{\cdot | P})}}
  + \abs*{\E{\psi(X_{\cdot | P}) - \psi(X_{\infty})}} &\lesssim N^{-1} + P^{-1}, \\
\label{eq:P-var-assumption} \tag{\textbf{P2}}  \var{\phi_P^N} &\lesssim P^{-1}.
\end{align}
These assumption will be verified numerically in Section~\ref{s:res}.
In general, they translate to smoothness and boundedness assumptions
on $\driftA, \volB$ and $\psi$.  %
Indeed, in \eqref{eq:P-bias-assumption}, the weak convergence of the
Euler-Maruyama method with respect to the number of time steps is a
standard result shown, for example, in \cite{kloden:numsde} by
assuming 4-time differentiability of $\driftA, \volB$ and
$\psi$. Showing that the constant multiplying $N^{-1}$ is bounded for
all $P$ is straightforward by extending the standard proof of weak
convergence the Euler-Maruyama method in \cite[Chapter
14]{kloden:numsde} and assuming boundedness of the derivatives
$\driftA, \volB$ and $\psi$.
On the other hand, the weak convergence with respect to the number of
particles, i.e., $\E{\psi(X_{p|P})} \to \E{\psi(X_{\infty})}$ is a
consequence of the propagation of chaos which is shown, without a
convergence rate, in \cite{sznitman1991topics} for $\psi$ Lipschitz,
$\volB$ constant and $\driftA$ of the the form
\begin{equation}
  \driftA(t, x, \mu) = \int \kappa(t, x, y) \mu(\di y)\label{eq:drift-example}
\end{equation}
where $\kappa(t, \cdot, \cdot)$ is Lipschitz. On the other hand, for
one-dimensional systems and using the results from \cite[Theorem
3.2]{kolokoltsov2015mean} we can show the weak convergence rate with
respect to the number of particles and the convergence rate for the
variance of $\phi_P$ as the following lemma shows. Below, $C(\rset)$
is the space of continuous bounded functions and $C^k(\rset)$ is the
space of continuous bounded functions whose $i$'th derivative is in
$C(\rset)$ for $i=1, \ldots, k$.

\begin{lemma}[Weak and variance convergence rates w.r.t. number of
  particles]
  Consider \eqref{eq:particle-sys} and
  \eqref{eq:particle-sys-meanfield} with $d=1$, strictly positive
  $\volB(t, \cdot, \mu) = \volB(\cdot) \in C^3(\rset)$ and $\driftA$
  as in \eqref{eq:drift-example} with
  $\kappa(t, x, \cdot) \in C^{2}(\rset)$,
  $\frac{\partial \kappa(t, x, \cdot)}{\partial x} \in C(\rset)$ and
  $\kappa(t, \cdot, y) \in C^{2}(\rset)$ where the norms are assumed
  to be uniform with respect the arguments, $x$ and $y$,
  respectively. If, moreover, $\psi \in C^{2}(\rset)$, then
  \begin{align}
    \abs*{\E{\psi\p{X_{\cdot|P}} - \E{\psi\p{X_{\infty}}}}} &\lesssim
                                                              P^{-1},\label{eq:P-weak-conv}   \\
    {\var{\frac{1}{P}\sum_{p=1}^P \psi(X_{p|P})}} &\lesssim P^{-1}.
    \label{eq:P-var-conv}
  \end{align}
\end{lemma}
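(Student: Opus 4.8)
The plan is to reduce both claims to the quantitative propagation-of-chaos estimates of \cite[Theorem 3.2]{kolokoltsov2015mean}. The key observation is that both the weak error and the variance can be controlled by a single first-order-in-$P^{-1}$ expansion of the joint law of a finite block of particles. Concretely, for $\psi \in C^2(\rset)$ I expect Theorem 3.2 of \cite{kolokoltsov2015mean} (under the stated regularity: $\volB \in C^3$ strictly positive, $\kappa$ with the indicated mixed smoothness so that the drift \eqref{eq:drift-example} is smooth in both arguments uniformly) to supply a bound of the form
\[
  \abs*{\E{\psi(X_{1|P}(T))} - \E{\psi(X_\infty(T))}} \lesssim P^{-1},
\]
which is precisely \eqref{eq:P-weak-conv}; here exchangeability lets me replace $X_{\cdot|P}$ by $X_{1|P}$ without loss. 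The regularity hypotheses on $\kappa$ and $\volB$ are exactly what is needed so that the associated nonlinear semigroup acting on test functions is smoothing/differentiable to the order required by that theorem, so this step should be essentially a citation once the hypotheses are matched up.

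For the variance bound \eqref{eq:P-var-conv}, I would expand
\[
  \var*{\frac{1}{P}\sum_{p=1}^P \psi(X_{p|P})} = \frac{1}{P}\var{\psi(X_{1|P})} + \frac{P-1}{P}\cov{\psi(X_{1|P}),\psi(X_{2|P})}.
\]
The first term is $\Order{P^{-1}}$ trivially since $\psi$ is bounded. The crux is to show that the pair correlation satisfies
\[
  \cov{\psi(X_{1|P}(T)),\psi(X_{2|P}(T))} \lesssim P^{-1},
\]
i.e.\ that the two-particle marginal factorizes up to an $\Order{P^{-1}}$ error. This again is the content of propagation of chaos with a rate: by Theorem 3.2 of \cite{kolokoltsov2015mean} the $k$-particle marginal law is within $\Order{P^{-1}}$ (in a dual norm acting on $C^2$ test functions of each coordinate) of the product law $\mu_\infty^T \otimes \mu_\infty^T$, and applying this with the test function $\psi \otimes \psi$ gives
\[
  \abs*{\E{\psi(X_{1|P})\psi(X_{2|P})} - \E{\psi(X_\infty)}^2} \lesssim P^{-1}.
\]
Subtracting $\E{\psi(X_{1|P})}\E{\psi(X_{2|P})}$ and using \eqref{eq:P-weak-conv} to replace $\E{\psi(X_\infty)}^2$ by $\E{\psi(X_{1|P})}^2$ up to $\Order{P^{-1}}$ then bounds the covariance, and combining the two terms yields \eqref{eq:P-var-conv}.

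The main obstacle I anticipate is bookkeeping rather than conceptual: checking that the smoothness assumptions stated in the lemma ($\kappa(t,x,\cdot)\in C^2$, $\partial_x\kappa(t,x,\cdot)\in C$, $\kappa(t,\cdot,y)\in C^2$, $\volB\in C^3$, uniform norms) are exactly strong enough to invoke \cite[Theorem 3.2]{kolokoltsov2015mean} with the $\Order{P^{-1}}$ rate and with test functions in $C^2$ — in particular that the order of differentiability of the nonlinear flow matches what that theorem demands, and that the strict positivity of $\volB$ is what ensures the needed regularization. A secondary point to be careful about is that the theorem is typically stated for a fixed $T<\infty$ with constants depending on $T$, which is fine here since we only need the estimate at the final time $T$. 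Once the hypotheses are aligned, both \eqref{eq:P-weak-conv} and \eqref{eq:P-var-conv} follow by the decomposition above with no further analysis.
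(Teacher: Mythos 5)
Your proposal matches the paper's proof essentially step for step: both cite \cite[Theorem 3.2]{kolokoltsov2015mean} for the $\Order{P^{-1}}$ weak rate and for the two-particle correlation estimate $\abs{\E{\psi(X_{1|P})\psi(X_{2|P})} - \E{\psi(X_\infty)}^2} \lesssim P^{-1}$, and both then use the exchangeability-based variance decomposition together with the same algebraic rewriting of the covariance in terms of the weak error. No substantive difference to report.
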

\begin{proof}
  The system in this lemma is a special case of the system in
  \cite[Theorem 3.2]{kolokoltsov2015mean}. From there and given the
  assumptions of the current lemma, \eqref{eq:P-weak-conv} immediately
  follows. Moreover, from the same reference, we can futher conclude that
  \[
\abs*{\E{\psi(X_{p|P})\psi(X_{q|P})} - \E{\psi(X_{\infty})}^2}
\lesssim {P^{-1}}
\]
for $1 \leq p\neq q \leq P$.
Using this we can show \eqref{eq:P-var-conv} since
\[\aligned
  {\var{\frac{1}{P}\sum_{p=1}^P \psi(X_{p|P})}} &=
  \frac{1}{P}\var{\psi\p*{X_{\cdot|P}}} + \frac{1}{P^2}
  \sum_{p=1}^P\sum_{q=1, p\neq q}^P\cov{\psi(X_{p|P}), \psi(X_{q|P})}
  \endaligned
\]
and
\[\aligned
  \cov{X_{p|P}, X_{q|P}} &= \E{\psi(X_{p|P}) \psi(X_{q|P})} -
  \E{\psi(X_{\cdot|P})}^2 \\
  &= \E{\psi(X_{p|P}) \psi(X_{q|P})} - \E{\psi(X_{\infty})}^2 \\
  &\hskip 1cm -\p{\E{\psi(X_{\cdot|P})} - \E{\psi(X_{\infty})}}^2 -
  2\E{\psi(X_{\infty})}\p{\E{\psi(X_{\cdot|P})} -
    \E{\psi(X_{\infty})}}
  \\
  &\lesssim {P^{-1}}.  \endaligned\]
\end{proof}

From here, the rate of convergence for the variance of $\phi_P^N$ can
be shown by noting that
\[
  \var{\phi_P^N} \leq \abs*{\var{\phi_P^N}-\var{\phi_P}} +
  \var{\phi_P}
\]
and noting that $\var{\phi_P} \lesssim {P^{-1}}$, then showing that
the first term is
$\abs*{\var{\phi_P^N}-\var{\phi_P}} \lesssim {N^{-1} P^{-1}}$ because
of the weak convergence with respect to the number of time steps.

Finally, as mentioned above, with a naive method, the total cost to
compute a single sample of $\phi_{P}^N$ is $\Order{N P^2}$. The
quadratic power of $P$ can be reduced by using, for example, a
multipole algorithm \cite{carrier1988fast, greengard1987fast}.  In
general, we consider the work required to compute one sample of
$\phi_P^N$ as $\Order{N P^{\ggp}}$ for a positive constant,
$\ggp \geq 1$.

\section{Monte Carlo methods} \label{s:mc} In this section, we study
different Monte Carlo methods that can be used to estimate the
previous quantity, $\phi_\infty$. In the following, we use the
notation
$\vec \omega_{p:P}^{(m)} \eqdef \op*{\omega^{(m)}_{q}}_{q=p}^P$ where,
for each $q$, $\omega^{(m)}_{q}$ denotes the $m$'th sample of the set
of underlying random variables that are used in calculating
$X_{q|P}^{N|N}$, i.e., the Wiener path, $W_{q}$, the initial
condition, $x_q^0$, and any random variables that are used in
$\driftA$ or $\volB$. Moreover, we sometimes write
$\phi_P^N(\vec \omega_{1:P}^{(m)})$ to emphasize the dependence of the
$m'$th sample of $\phi_P^N$ on the underlying random variables.

\subsection{Monte Carlo (MC)} \label{ss:mc}
The first estimator that we look at is a Monte Carlo estimator. For a
given number of samples, $M$, number of particles, $P$, and number of
time steps, $N$, we can write the MC estimator as follows:
\[ \mathcal A_{\text{MC}}(M, P, N) = \frac{1}{M} \sum_{m=1}^M
  \phi_P^N(\vec \omega_{1:P}^{(m)}). \]
Here,
\[
\aligned
&\E{\mathcal A_{\text{MC}}(M, P, N)} = \E{\phi_P^N} =
\frac{1}{P}\sum_{p=1}^P \E{\psi(X_{p|P}^{N|N})} = \E{\psi(X_{\ \cdot|P}^{N|N})},\\
\text{and}\quad& \var{\mathcal A_{\text{MC}}(M, P, N)} = \frac{\var{\phi_P^N}}{M},\\
\text{while the total work is}\quad& \work{\mathcal A_{\text{MC}}(M,
  P,N)} = M  NP^{\ggp}.
\endaligned\]
Hence, due to \eqref{eq:P-bias-assumption}, we must have
$P = \Order{\tol^{-1}}$ and $N = \Order{\tol^{{-1}}}$ to satisfy
\eqref{eq:bias-const}, and, due to \eqref{eq:P-var-assumption}, we
must have $M = \Order{\tol^{-1}}$ to satisfy \eqref{eq:var-const}.
Based on these choices, the total work to compute $\mathcal A_{\text{MC}}$ is
\[ \work{\mathcal A_{\text{MC}}} = \Order{\tol^{-2 -\ggp}}. \]
\begin{kuramoto}
  Using a naive calculation method of $\phi_{P}^N$ (i.e., $\ggp=2$)
  gives a work complexity of $\Order{\tol^{-4}}$.  See also
  Table~\ref{tbl:rates} for the work complexities for different common
  values of $\ggp$.
\end{kuramoto}

\subsection{Multilevel Monte Carlo (MLMC)} \label{ss:mlmc} For a given
$L \in \nset$, define two hierarchies, $\{N_\ell\}_{\ell=0}^L$ and
$\{P_\ell\}_{\ell=0}^L$, satisfying $P_{\ell-1} \leq P_{\ell}$ and
$N_{\ell-1} \leq N_{\ell}$ for all $\ell$.  Then, we can write the
MLMC estimator as follows:
\begin{equation}
\label{eq:MLMC-estimator}
  \mathcal A_{\text{MLMC}}(L) = \sum_{\ell=0}^L
  \frac{1}{M_\ell} \sum_{m=1}^{M_\ell}
  {\op*{\phi_{P_\ell}^{N_\ell} - \varphi_{P_{\ell-1}}^{N_{\ell-1}}}\op*{\vec \omega_{1:P_\ell}^{(\ell,m)}}},
\end{equation}
where we later choose the function
$\varphi_{P_{\ell-1}}^{N_{\ell-1}}(\cdot)$ such that
$\varphi_{P_{-1}}^{N_{-1}}(\cdot) = 0$ and
$ \E{\varphi_{P_{\ell-1}}^{N_{\ell-1}}}=
\E{\phi_{P_{\ell-1}}^{N_{\ell-1}}},$
so that $\E{\mathcal A_{\textnormal{MLMC}}} = \E{\phi_{P_L}^{N_L}}$
due to the telescopic sum.  For MLMC to have better work complexity
than that of Monte Carlo,
$\phi_{P_\ell}^{N_\ell}(\vec \omega_{1:P_\ell}^{(\ell,m)})$ and
$\varphi_{P_{\ell-1}}^{N_{\ell-1}}(\vec \omega_{1:P_\ell}^{(\ell,m)})$
must be correlated for every $\ell$ and $m$, so that their difference
has a smaller variance than either
$\phi_{P_\ell}^{N_\ell}(\vec \omega_{1:P_\ell}^{(\ell,m)})$ or
$\varphi_{P_{\ell-1}}^{N_{\ell-1}}(\vec \omega_{1:P_\ell}^{(\ell,m)})$ for all
$\ell > 0$.

Given two discretization levels, $N_\ell$ and $N_{\ell-1}$, with the
same number of particles, $P$, we can generate a sample of
$\varphi_{P}^{N_{\ell-1}}(\vec \omega_{1:P}^{(\ell,m)})$ that is correlated to
$\phi_{P}^{N_\ell}(\vec \omega_{1:P}^{(\ell,m)})$ by taking
\[ \varphi_{P}^{N_{\ell-1}}(\vec \omega_{1:P}^{(\ell,m)}) =
  \phi_{P}^{N_{\ell-1}}(\vec \omega_{1:P}^{(\ell,m)}).\] That is, we
use the same samples of the initial values, $\{x_{p}^0\}_{p\geq 1}$,
the same Wiener paths, $\{W_p\}_{p=1}^P$, and, in case they are random
as in \eqref{eq:kuramoto-system}, the same samples of the advection
and diffusion coefficients, $\driftA$ and $\volB$, respectively.
We can improve the correlation by using an antithetic sampler as
detailed in \cite{giles:antithetic} or by using a higher-order scheme
like the Milstein scheme \cite{giles:milstein}. In the Kuramoto
example, the Euler-Maruyama and the Milstein schemes are equivalent
since the diffusion coefficient is constant.

On the other hand, given two different sizes of the particle system,
$P_\ell$ and $P_{\ell-1}$, with the same discretization level, $N$, we
can generate a sample of
$\varphi_{P_{\ell-1}}^{N}(\vec \omega_{1:P_\ell}^{(\ell,m)})$
that is correlated to
$\phi_{P_\ell}^{N}(\vec \omega_{1:P_\ell}^{(\ell,m)})$
by taking
\begin{equation}
\label{eq:MLMC-non-antithetic}
 \varphi_{P_{\ell-1}}^{N}(\vec \omega_{1:P_{\ell}}^{(\ell,m)}) =
\overline{\varphi}_{P_{\ell-1}}^{N}(\vec \omega_{1:P_{\ell}}^{(\ell,m)}) \eqdef
  \phi_{P_{\ell-1}}^{N}\op*{\vec \omega_{1:P_{\ell-1}}^{(\ell,m)}}.
\end{equation}
In other words, we use the same $P_{\ell-1}$ sets of random variables
out of the total $P_\ell$ sets of random variables to run an
independent simulation of the stochastic system with $P_{\ell-1}$
particles.

We also consider another estimator that is more correlated with
$\phi_{P_\ell}^{N}(\vec \omega_{1:P_\ell}^{(\ell,m)})$. The
``antithetic'' estimator was first independently introduced in
\cite[Chapter 5]{hajiali:msthesis} and \cite{bujok2013multilevel} and
subsequently used in other works on particle systems
\cite{rosin2014multilevel} and nested simulations \cite{giles:acta}.
In this work, we call this estimator a ``partitioning'' estimator to
clearly distinguish it from the antithetic estimator in
\cite{giles:antithetic}.  We assume that $P_\ell = \betap P_{\ell-1}$
for all $\ell$ and some positive integer $\betap$ and take
\begin{equation}
\label{eq:MLMC-antithetic}
 \varphi_{P_{\ell-1}}^{N}(\vec \omega_{1:P_\ell}^{(\ell,m)}) =
 \widehat\varphi_{P_{\ell-1}}^{N}(\vec \omega_{1:P_\ell}^{(\ell,m)}) \eqdef
\frac{1}{\betap}
  \sum_{i=1}^{\betap} \phi_{P_{\ell-1}}^{N}
  \op*{
\vec \omega^{(\ell,m)}_{\op*{(i-1)P_{\ell-1}+1}\ :\ iP_{\ell-1}}
}.
\end{equation}
That is, we split the underlying $P_\ell$ sets of random variables
into $\betap$ identically distributed and independent groups,
each of size $P_{\ell-1}$, and independently simulate $\betap$
particle systems, each of size $P_{\ell-1}$. Finally, for each
particle system, we compute the quantity of interest and take the
average of the $\betap$ quantities.

\providecommand{\other}[1]{{\widetilde{#1}}}
In the following subsections, we look at different settings in
which either $P_\ell$ or $N_\ell$ depends on $\ell$ while the other
parameter is constant for all $\ell$. We begin by recalling the
optimal convergence rates of MLMC when applied to a generic random
variable, $Y$, with a trivial generalization to the case when there
are two discretization parameters: one that is a function of the
level, $\ell$, and the other, $\other{L}$, that is fixed for all levels.
\begin{theorem}[Optimal MLMC complexity] \label{thm:mlmc-complexity}
  Let $Y_{\other{L},  \ell}$ be an approximation of the
  random variable, $Y$, for every $(\other{L}, \ell) \in \nset^2$.
  Denote by $Y^{(\ell, m)}$ a sample of $Y$ and denote its corresponding
  approximation by $Y_{\other{L}, \ell}^{(\ell, m)}$, where we
  assume that the samples $\{Y^{(\ell, m)}\}_{\ell, m}$ are mutually
  independent.  Consider the MLMC estimator
  \[ \mathcal A_{\textnormal{MLMC}}(\other{L}, L) = \sum_{\ell=0}^{L}
    \frac{1}{M_\ell}\sum_{m=1}^{M_\ell} (Y_{\other{L}, \ell}^{(\ell,
      m)} - Y_{\other{L}, \ell-1}^{(\ell, m)}) \] with
  $Y_{\other{L}, -1}^{\ell, m} = 0$ and for
  $\beta, w, \gamma, s, \other{\beta}, \other{w}, \other{\gamma},
  \other{c} > 0$ where $s \leq 2 w$, assume the following:
\begin{enumerate}
\item
  $\abs*{\E{Y - Y_{\other{L}, \ell}}} \lesssim \other{\beta}^{-\other{w}
  \other{L}} + \beta^{-w \ell} $
\item
  $\var{Y_{\other{L}, \ell} - Y_{\other{L}, \ell-1}} \lesssim
  \other{\beta}^{-\other{c} \other{L}} \beta^{-s \ell}$
\item $\work{Y_{\other{L}, \ell} - Y_{\other{L}, \ell-1}} \lesssim
  \other\beta^{\other{\gamma} \other{L}} \beta^{\gamma \ell}$.
\end{enumerate}
Then, for any
$\tol < e^{-1}$, there exists $\other{L}, L$ and a sequence of
$\en*\{\}{M_\ell}_{\ell=0}^L$ such that
  \begin{equation}
    \label{eq:mlmc-thm-prob-constraint}
  \prob{\abs{\mathcal A_{\textnormal{MLMC}}(\other{L}, L) - Y} \geq
    \tol} \leq \epsilon
\end{equation}
and
\begin{equation}\aligned
  \label{eq:mlmc-work}
  \work{\mathcal A_{\textnormal{MLMC}}(\other L, L)} &\eqdef
  \sum_{\ell=0}^L M_\ell \work{Y_{\other{L}, \ell} - Y_{\other{L},
      \ell-1}} \\
  &\lesssim \begin{cases}
    \tol^{- 2 - {\frac{\other{ \gamma}-\other{c}}{\other{ w}}} } & \text{if } s > \gamma
    \\
    \tol^{- 2 - {\frac{\other{\gamma}-\other{c}}{\other{w}}} } \log\op*{\tol^{-1}}^2 & \text{if } s = \gamma
    \\
    \tol^{- 2 - {\frac{\other{\gamma}-\other{c}}{\other{w}}} - \frac{\gamma-s}{w}} & \text{if } s < \gamma.
\end{cases}
\endaligned\end{equation}
\end{theorem}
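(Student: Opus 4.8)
The plan is to reproduce the classical MLMC complexity argument (Giles, Cliffe--Giles--Scheichl), carrying the fixed index $\tilde{L}$ along as a second, level-independent bias parameter, and to quote the three assumptions of the theorem in place of the usual weak/strong/cost rates. \textbf{Step 1: reduce to a bias and a variance constraint.} Exactly as in Section~\ref{sec:prob-set}: since $Y_{\tilde L,-1}^{(\ell,m)} = 0$, the sum defining $\mathcal A_{\textnormal{MLMC}}(\tilde L,L)$ telescopes in $\ell$, so $\E{\mathcal A_{\textnormal{MLMC}}(\tilde L,L)} = \E{Y_{\tilde L,L}}$, hence the estimator bias equals $\abs*{\E{Y_{\tilde L,L}-Y}}$. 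By Chebyshev's inequality (or, with a sharper constant, a central-limit argument in the spirit of \eqref{eq:var-const}), the constraint \eqref{eq:mlmc-thm-prob-constraint} is implied by $\abs*{\E{Y_{\tilde L,L}-Y}} \le \tfrac12\tol$ together with $\var{\mathcal A_{\textnormal{MLMC}}} \le \delta^2$ where $\delta \eqdef \sqrt{\epsilon}\,\tol/2$ (equivalently $\theta\tol/C_\epsilon$ after a tolerance split). By assumption~(1), $\abs*{\E{Y-Y_{\tilde L,\ell}}} \lesssim \tilde\beta^{-\tilde w\tilde L}+\beta^{-w\ell}$, so the bias bound holds as soon as
\[
  \tilde L = \Bigl\lceil \tfrac{1}{\tilde w\log\tilde\beta}\log(\tol^{-1}) + c_1 \Bigr\rceil, \qquad
  L = \Bigl\lceil \tfrac{1}{w\log\beta}\log(\tol^{-1}) + c_2 \Bigr\rceil,
\]
for absolute constants $c_1,c_2$ depending only on the hidden constants in~(1); note $\tilde L,L = \Order{\log(\tol^{-1})}$, and that $\tilde\beta^{\pm\tilde L}$ and $\beta^{\pm L}$ are then comparable, up to multiplicative constants, to $\tol^{\mp1/\tilde w}$ and $\tol^{\mp1/w}$ respectively.

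\textbf{Step 2: optimal sample sizes.} Write $V_\ell \eqdef \var{Y_{\tilde L,\ell}-Y_{\tilde L,\ell-1}}$ and $W_\ell \eqdef \work{Y_{\tilde L,\ell}-Y_{\tilde L,\ell-1}}$; by the assumed independence of $\{Y^{(\ell,m)}\}_{\ell,m}$ across levels, $\var{\mathcal A_{\textnormal{MLMC}}} = \sum_{\ell=0}^L V_\ell/M_\ell$. Minimising $\sum_{\ell=0}^L M_\ell W_\ell$ over real $M_\ell>0$ subject to $\sum_{\ell=0}^L V_\ell/M_\ell = \delta^2$ (Lagrange multipliers, or Cauchy--Schwarz) yields $M_\ell \propto \sqrt{V_\ell/W_\ell}$ and optimal work $\delta^{-2}\bigl(\sum_{\ell=0}^L\sqrt{V_\ell W_\ell}\bigr)^2$. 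Taking the integer choice $M_\ell = \bigl\lceil \delta^{-2}\sqrt{V_\ell/W_\ell}\sum_{k=0}^L\sqrt{V_k W_k}\,\bigr\rceil$ preserves $\var{\mathcal A_{\textnormal{MLMC}}}\le\delta^2$ and gives
\[
  \work{\mathcal A_{\textnormal{MLMC}}(\tilde L,L)} \le \sum_{\ell=0}^L W_\ell + \delta^{-2}\Bigl(\sum_{\ell=0}^L\sqrt{V_\ell W_\ell}\Bigr)^2 .
\]

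\textbf{Step 3: sum the series and substitute.} Assumptions~(2)--(3) give $V_\ell W_\ell \lesssim \tilde\beta^{(\tilde\gamma-\tilde c)\tilde L}\beta^{(\gamma-s)\ell}$, so $\sum_{\ell=0}^L\sqrt{V_\ell W_\ell} \lesssim \tilde\beta^{(\tilde\gamma-\tilde c)\tilde L/2}\sum_{\ell=0}^L\beta^{(\gamma-s)\ell/2}$, and the geometric sum is $\Order{1}$ if $s>\gamma$, $\Order{L}=\Order{\log(\tol^{-1})}$ if $s=\gamma$, and $\Order{\beta^{(\gamma-s)L/2}}$ if $s<\gamma$. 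Inserting the values of $\tilde L$ and $L$ from Step~1 (so that $\tilde\beta^{(\tilde\gamma-\tilde c)\tilde L/2}\lesssim\tol^{-(\tilde\gamma-\tilde c)/(2\tilde w)}$ and $\beta^{(\gamma-s)L/2}\lesssim\tol^{-(\gamma-s)/(2w)}$), squaring, and dividing by $\delta^2$ (a constant multiple of $\tol^2$) reproduces exactly the three-case bound in \eqref{eq:mlmc-work}. It then remains to check that the integer-rounding overhead $\sum_{\ell=0}^L W_\ell \lesssim \tilde\beta^{\tilde\gamma\tilde L}\beta^{\gamma L}$ (geometric, since $\gamma,\tilde\gamma>0$) is of lower order than the leading term, and that enforcing $M_\ell\ge 1$ does not spoil the variance bound; this bookkeeping is where the structural hypothesis $s\le 2w$ enters. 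The only genuinely delicate point in the argument is precisely this control of the integer-sample overhead and the accompanying lower-order terms; the telescoping identity, the Lagrange optimisation and the geometric-series estimates are routine, and the extra index $\tilde L$ never interacts with the level-dependent machinery -- it contributes only the scalar prefactors $\tilde\beta^{(\tilde\gamma-\tilde c)\tilde L/2}$ and $\tilde\beta^{\tilde\gamma\tilde L}$, which is why the result is a near-verbatim transcription of the standard MLMC complexity theorem.
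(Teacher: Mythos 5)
Your proposal is correct and follows essentially the same route as the paper's own (sketched) proof, which likewise defers to the standard Cliffe--Giles--Scheichl argument: split \eqref{eq:mlmc-thm-prob-constraint} into bias and variance constraints, choose $\tilde L$ and $L$ logarithmically in $\tol^{-1}$ from assumption (1) so that the fixed index contributes only scalar prefactors, and optimize $M_\ell \propto \sqrt{V_\ell/W_\ell}$ before summing the geometric series in the three cases. Your version is in fact more complete than the paper's, since you carry out the Lagrange optimization and explicitly flag the integer-rounding overhead where the hypothesis $s\le 2w$ is needed.
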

\begin{proof}
  The proof can be straightforwardly derived from the proof of
  \cite[Theorem 1]{scheichl.giles:MLMC}, we sketch here the main
  steps.  First, we split the constraint
  \eqref{eq:mlmc-thm-prob-constraint} to a bias and variance
  constraints similar to \eqref{eq:bias-const} to
  \eqref{eq:var-const}, respectively. Then, since
  $\E{\mathcal A_{\textnormal{MLMC}}(\other{L}, L)} = \E{Y_{\other L,
      L}}$, given the first assumption of the theorem and
  imposing the bias constraint yield
  $\other L = \Order{\frac{1}{\other{w}\log(\other{\beta})}\log\p{\tol^{-1}}}$ and
  $L= \Order{\frac{1}{w\log(\beta)}\log\p{\tol^{-1}}}$. The assumptions on the
  variance and work then give:
  \[\aligned
    \var{Y_{\other{L}, \ell} - Y_{\other{L}, \ell-1}} &\lesssim
    \tol^{\frac{\other{c}}{\other{w}}} \beta^{-s \ell},\\
    \work{Y_{\other{L}, \ell} - Y_{\other{L}, \ell-1}} &\lesssim
    \tol^{-\frac{\other{\gamma}}{\other{w}}} \beta^{\gamma \ell}.
    \endaligned
  \]
  Then
  \[
    \var{\mathcal A_{\textnormal{MLMC}}(\other{L}, L)} =
    \sum_{\ell=0}^L M_\ell^{-1} {\var{Y_{\other{L}, \ell} -
        Y_{\other{L}, \ell-1}}} \lesssim
    \tol^{\frac{\other{c}}{\other{w}}} \sum_{\ell=0}^L
    M_\ell^{-1} \beta^{-s \ell},
  \]
  due to mutual independence of $\{Y^{(\ell, m)}\}_{\ell, m}$. Moreover,
  \[
    \work{\mathcal A_{\textnormal{MLMC}}(\other{L}, L)} =
    \sum_{\ell=0}^L M_\ell \work{Y_{\other{L}, \ell} - Y_{\other{L},
        \ell-1}} \lesssim \tol^{-\frac{\other{\gamma}}{\other{w}}}
    \sum_{\ell=0}^L M_\ell^{-1} \beta^{\gamma \ell}
  \]
  Finally, given $L$, solving for $\{M_\ell\}_{\ell=0}^L$ to minimize
  the work while satisfying the variance constraint gives the desired
  result.
\end{proof}

\subsubsection{MLMC hierarchy based on the number of time steps}\label{sec:mlmc-N}
In this setting, we take $N_\ell = \p{\betat}^{\ell}$ for some $\betat>0$
and $P_\ell = P_L$ for all $\ell$, i.e., the number of particles is a
constant, $P_L$, on all levels.  We make an extra assumption in this
case, namely:
\begin{equation}
  \label{eq:MLMC-N-var-assumption}
  \tag{\textbf{MLMC1}}\var{\phi_{P_L}^{N_\ell} -
    \varphi_{P_L}^{N_{\ell-1}}} \lesssim P_L^{-1} N_{\ell}^{-\sst} =
  P_L^{-1} \op{\betat}^{-\sst \ell},
\end{equation}
for some constant $\sst > 0$. The factor $\op{\betat}^{-\sst \ell}$ is
the usual assumption on the variance convergence of the level
difference in MLMC theory \cite{giles:MLMC} and is a standard result
for the Euler-Maruyama scheme with $\sst = 1$ and for the Milstein
scheme with $\sst =2$, \cite{kloden:numsde}.  On the other hand, the
factor $P_L^{-1}$ can be motivated from \eqref{eq:P-var-assumption},
which states that the variance of each term in the difference
converges at this rate.

Due to Theorem~\ref{thm:mlmc-complexity}, we can conclude that the
work complexity of MLMC is
\begin{equation}
  \label{eq:mlmc-n-compelxity}
  \work{\mathcal A_{\textnormal{MLMC}}} \lesssim \begin{cases}
    \tol^{- 1  - \ggp} & \text{if } \sst > 1
    \\
    \tol^{- 1 - \ggp} \log\op*{\tol^{-1}}^2 & \text{if } \sst = 1
    \\
    \tol^{- 2 - \ggp + \sst}
    & \text{if } \sst < 1.
\end{cases}
\end{equation}

\begin{kuramoto}
  In this example, using the Milstein time-stepping scheme, we have
  $\sst = 2$ (cf. Figure~\ref{fig:mlmc-rates}), and a naive
  calculation method of $\phi_{P}^N$ ($\ggp=2$) gives a work
  complexity of $\Order{\tol^{-3}}$.  See also Table~\ref{tbl:rates}
  for the work complexities for different common values of $\sst$ and
  $\ggp$.
\end{kuramoto}

\subsubsection{MLMC hierarchy based on the number of
  particles}\label{sec:mlmc-P}
In this setting, we take  $P_\ell = \p{\betap}^{\ell}$ for some $\betap>0$
and $N_\ell = N_L$ for all $\ell$, i.e., we take the number of
time steps to be a constant, $N_L$, on all levels. We make an extra assumption in this case:
\begin{equation}
  \label{eq:MLMC-P-var-assumption}
  \tag{\textbf{MLMC2}}\var{\phi_{P_{\ell}}^{N_L} -
    \varphi_{P_{\ell-1}}^{N_{L}}} \lesssim P_{\ell}^{-\ssp-1}
  = \betap^{-\ell(\ssp+1)},
\end{equation}
for some constant $\ssp\geq 0$.  The factor $\betap^{-\ssp \ell}$ is
the usual assumption on the variance convergence of the level
difference in MLMC theory \cite{giles:MLMC}.
On the other hand, the factor $P_\ell^{-1}$ can be motivated
from \eqref{eq:P-var-assumption}, since the variance of each term in
the difference is converging at this rate.

Due to Theorem~\ref{thm:mlmc-complexity}, we can conclude that the
work complexity of MLMC in this case is
\begin{equation}
  \work{\mathcal A_{\textnormal{MLMC}}} \lesssim \begin{cases}
    \tol^{- 3} & \text{if } \ssp+1 > \ggp
    \\
    \tol^{- 3} \log\op*{\tol^{-1}}^2 & \text{if } \ssp+1 = \ggp
    \\
    \tol^{- 2 - {\ggp+\ssp}}
    & \text{if } \ssp+1 < \ggp.
\end{cases}
\end{equation}

\begin{kuramoto}
Using a naive calculation method of $\phi_{P}^N$ ($\ggp=2$),
we distinguish between the two samplers:
\begin{itemize}
\item Using the sampler $\overline \varphi$
  in~\eqref{eq:MLMC-non-antithetic}, we verify
  numerically that
  $\ssp = 0$ (cf. Figure~\ref{fig:mlmc-rates}). Hence, the work complexity is
  \( \Order{\tol^{-4}}, \)
  which is the same work complexity as a Monte Carlo estimator. This should
  be expected since using the ``correlated'' samples of
  $\overline \varphi_{P_{\ell-1}}^N$ and $\phi_{P_\ell}^N$ do not
  reduce the variance of the difference, as
  Figure~\ref{fig:mlmc-rates} shows.
\item Using the partitioning estimator, $\widehat \varphi$,
  in~\eqref{eq:MLMC-antithetic}, we verify numerically that $\ssp = 1$
  (cf. Figure~\ref{fig:mlmc-rates}).  Hence, the work complexity is
  $ \Order{\tol^{-3} \log\op*{\tol^{-1}}^2}$.  Here the samples of
  $\widehat \varphi_{P_{\ell-1}}^N$ have higher correlation to
  corresponding samples of $\phi_{P_\ell}^N$, thus reducing the
  variance of the difference. Still, using MLMC with hierarchies based
  on the number of times steps (fixing the number of particles) yields
  better work complexity. See also Table~\ref{tbl:rates} for the work
  complexities for different common values of $\sst$ and $\ggp$.
\end{itemize}
\end{kuramoto}

\subsubsection{MLMC hierarchy based on both the number of particles and
  the number of times steps}\label{sec:mlmc-NP}
In this case, we vary both the number of particles and the number of
time steps across MLMC levels. That is, we take
$P_\ell = \p{\betap}^{\ell}$ and $N_\ell = \p{\betat}^{\ell}$ for all
$\ell$. In this case, a reasonable assumption is

\begin{equation}
  \label{eq:MLMC-NP-var-assumption}
  \tag{\textbf{MLMC3}}\var{\phi_{P_\ell}^{N_\ell} -
    \phi_{P_{\ell-1}}^{N_{\ell-1}}} \lesssim
  \p{\betap}^{-\ell} \op*{\max\p*{\p{\betap}^{-\ssp}, \p{\betat}^{-\sst}}}^\ell.
\end{equation}
The factor $\betap^{-\ell}$ can be motivated
from~\eqref{eq:P-var-assumption} since the variance of each term in
the difference is converges at this rate. On the other hand,
$\op*{\max(\betap^{-\ssp}, \betat^{-\sst})}^\ell$ is the larger factor
of~\eqref{eq:MLMC-N-var-assumption}
and~\eqref{eq:MLMC-P-var-assumption}.

Due to Theorem~\ref{thm:mlmc-complexity} and defining
\[
\aligned
s &= \log(\betap)+\min(\ssp \log(\betap), \sst \log(\betat)) \\
\gamma &= \ggp \log(\betap)+ \log(\betat)\\
w &= \min(\log(\betap), \log(\betat)),
\endaligned
\]
we can conclude that the work complexity of MLMC is
\begin{equation}\label{eq:mlmc-NP-complexity}
  \work{\mathcal A_{\textnormal{MLMC}}} \lesssim \begin{cases}
    \tol^{- 2 } & \text{if }
     s > \gamma
    \\
    \tol^{- 2}  \log\op*{\tol^{-1}}^2 & \text{if } s = \gamma
    \\
    \tol^{- 2- \frac{\gamma-s}{w}}
    & \text{if } s < \gamma.
\end{cases}
\end{equation}
\begin{kuramoto}
  We choose $\betap=\betat$ and use a naive calculation method of
  $\phi_{P}^N$ (yielding $\ggp=2$) and the partitioning sampler
  (yielding $\ssp=1$). Finally, using the Milstein time-stepping
  scheme, we have $\sst=2$. Refer to Figure~\ref{fig:mlmc-rates} for
  numerical verification. Based on these rates, we have, in
  \eqref{eq:mlmc-NP-complexity}, $s=2\log(\betap), w=\log(\betap)$ and
  $\gamma=3\log(\betap)$. The MLMC work complexity in this case is
  \( \Order{\tol^{-3}}. \) See also Table~\ref{tbl:rates} for the work
  complexities for different common values of $\sst$ and $\ggp$.
\end{kuramoto}

\subsection{Multi-index Monte Carlo (MIMC)} \label{ss:mimc} Following
\cite{abdullatif.etal:MultiIndexMC}, for every multi-index
$\valpha = (\alpha_1, \alpha_2) \in \nset^{2}$, let
$P_{\alpha_1} = \p{\betap}^{\alpha_1}$ and
$N_{\alpha_2} = \p{\betat}^{\alpha_2}$ and define the first-order
mixed-difference operator in two dimensions as
\[ \vec \Delta \phi_{P_{\alpha_1}}^{N_{\alpha_2}}\op*{\vec
    \omega_{1:P_{\alpha_1}}} =
\p*{\op*{\phi_{P_{\alpha_1}}^{N_{\alpha_2}} - \varphi_{P_{\alpha_1
        -1}}^{N_{\alpha_2}} } -
  \op*{\phi_{P_{\alpha_1}}^{N_{\alpha_2-1}} -
    \varphi_{P_{\alpha_1-1}}^{N_{\alpha_2-1}} }}\p*{\vec
  \omega_{1:P_{\alpha_1}}}\] with $\phi_{P_{-1}}^{N}=0$ and
$\phi_{P}^{N_{-1}}=0$.  The MIMC estimator is then written for a given
$\mathcal I \subset \nset^{2}$ as
\begin{equation}
  \label{eq:mimc-estimator}
  \mathcal A_{\textnormal{MIMC}} = \sum_{\valpha \in \mathcal I}
  \frac{1}{M_\valpha}
\sum_{m=1}^{M_\valpha} \vec \Delta \phi_{P_{\alpha_1}}^{N_{\alpha_2}}
\p*{\vec \omega_{1:P_{\alpha_1}}^{(\valpha, m)}}
\end{equation}
At this point, similar to the original work on MIMC
\cite{abdullatif.etal:MultiIndexMC}, we make the following assumptions
on the convergence of
$\vec \Delta \phi_{P_{\alpha_1}}^{N_{\alpha_2}} $, namely
\begin{align}
\label{eq:MIMC-bias-assumption}\tag{\textbf{MIMC1}}\E{\vec \Delta \phi_{P_{\alpha_1}}^{N_{\alpha_2}}} \lesssim
  P_{\alpha_1}^{-1} N_{\alpha_2}^{-1}\\
\label{eq:MIP-var-assumption}\tag{\textbf{MIMC2}}\var{\vec \Delta \phi_{P_{\alpha_1}}^{N_{\alpha_2}}} \lesssim
P_{\alpha_1}^{-\ssp-1} N_{\alpha_2}^{-\sst}.%
\end{align}
Assumption \eqref{eq:MIMC-bias-assumption} is motivated from
\eqref{eq:P-bias-assumption} by assuming that the mixed first order
difference, $\vec \Delta \phi_{P_{\alpha_1}}^{N_{\alpha_2}}$, gives a
product of the convergence terms instead of a sum. Similarly,
\eqref{eq:MIP-var-assumption} is motivated from
\eqref{eq:MLMC-N-var-assumption} and
\eqref{eq:MLMC-P-var-assumption}. To the best of our knowledge, there
are currently no proofs of these assumptions for particle systems, but
we verify them numerically for~\eqref{eq:kuramoto-system} in
Figure~\ref{fig:mimc-rates}.

Henceforth, we will assume that $\betat = \betap$ for easier
presentation. Following \cite[Lemma 2.1]{abdullatif.etal:MultiIndexMC}
and recalling the assumption on cost per sample,
$ \work{\vec \Delta \phi_{P_{\alpha_1}}^{N_{\alpha_2}}} \lesssim
P_{\alpha_1}^{\ggp} N_{\alpha_2}^{} $, then, for every value of
$L\in\rset^+$, the optimal set can be written as
\begin{equation}\label{eq:mimc-optimal-set}
 \mathcal I(L) = \en*\{\} {\valpha \in \nset^2 \: : \: (1 - \ssp +
   \ggp)\alpha_1 + (3 - \sst)\alpha_2 \leq L},
\end{equation}
and the optimal
computational complexity of MIMC is
$\Order{\tol^{-2 - 2\max(0,
    \zeta)}\log\op*{\tol^{-1}}^{\mathfrak{p}}}$, where
\[
  \aligned \zeta =& \max\op*{\frac{ \ggp - \ssp - 1}{2}, \frac{
      1-\sst}{2}}, \\
  \xi =& \min\op*{\frac{2 -\ssp}{\ggp}, {2 - \sst
    }} \geq 0, \\
  \mathfrak{p} &=
\begin{cases}
0 & \zeta < 0\\
2 \mathfrak{z} & \zeta = 0 \\
2 (\mathfrak{z} - 1) (\zeta + 1) & \zeta > 0 \text{ and } \xi>0 \\
1 + 2(\mathfrak{z} - 1) (\zeta + 1) & \zeta > 0 \text{ and } \xi=0 \\
\end{cases}\\
  \text{and}\qquad
  \mathfrak{z} &=
\begin{cases}
  1 \hskip 2.7cm & {\ggp-\ssp - 1} \neq {1-\sst} \\
  2 & {\ggp-\ssp - 1} = {1-\sst}.
\end{cases}
& \\
\endaligned
\]
\begin{kuramoto}
  Here again, we use a naive calculation method of $\phi_{P}^N$
  (yielding $\ggp=2$) and the partitioning sampler (yielding
  $\ssp=1$). Finally, using the Milstein time-stepping scheme, we have
  $\sst=2$.  Hence, $\zeta = 0$, $\mathfrak z=1$ and
  \( \work{\mathcal A_{\textnormal{MIMC}}} = \Order{\tol^{-2}
    \log\op*{\tol^{-1}}^2}. \) See also Table~\ref{tbl:rates} for the
  work complexities for different common values of $\sst$ and $\ggp$.
\end{kuramoto}

\begin{table}
  \centering
  \providecommand{\rate}[2]{$(#1, #2)$}
\begin{tabular}{ l|c|c|c|c }
  Method & $\sst=1, \ggp=1$ & $\sst=1, \ggp=2$ & $\sst=2, \ggp=1$ &
                                                                    $\sst=2, \ggp=2$ \\
  \hline
  MC (Section~\ref{ss:mc})  & \rate{3}{0} &  \rate{4}{0}  & \rate{3}{0} & \rate{4}{0} \\
  MLMC (Section~\ref{sec:mlmc-N})  & \rate{2}{2} &  \rate{3}{2}  & \rate{2}{0} & \rate{3}{0} \\
  MLMC (Section~\ref{sec:mlmc-P})  & \rate{3}{0} &  \rate{3}{2}  & \rate{3}{0} & \rate{3}{2} \\
  MLMC (Section~\ref{sec:mlmc-NP}) & \rate{2}{2} &  \rate{3}{0}  & \rate{2}{2} & \rate{3}{0} \\
  MIMC (Section~\ref{ss:mimc})     & \rate{2}{2} &  \rate{2}{4}  &
                                                                   \rate{2}{0} & \rate{2}{2} \\
  \hline
\end{tabular}
\caption{The work complexity of the different methods presented in
  this work in common situations, encoded as $(a,b)$ to represent
  $\Order{\tol^{-a}\p{\log\p{\tol^{-1}}}^b}$. When appropriate, we use
  the partitioning estimator (i.e., $\ssp=1$). In general, MIMC has
  always the best complexity. However, when $\ggp=1$ MIMC does not offer
  an advantage over an appropriate MLMC method.}
  \label{tbl:rates}
\end{table}

\section{Numerical Example}\label{s:res}
In this section we provide numerical evidence of the assumptions and
work complexities that were made in the Section~\ref{s:mc}.  This
section also verifies that the constants of the work complexity (which
were not tracked) are not significant for reasonable error tolerances.
The results in this section were obtained using the \texttt{mimclib}
software library \cite{mimclib} and \texttt{GNU
  parallel}~\cite{Tange2011a}.

In the results outlined below, we focus on the Kuramoto example in
\eqref{eq:kuramoto-system}, with the following choices:
$\sigma = 0.4$, $T=1$, $x_{p}^0 \sim \mathcal N(0, 0.2)$ and
$\vartheta_p \sim \mathcal U(-0.2, 0.2)$ for all $p$.
We also set
\begin{equation}\label{eq:PN-choices}\aligned
  &P_\ell=5 \times 2^\ell &\text{ and } N_\ell=4 \times 2^\ell &\text {
    for MLMC},\\
  \text{and}\qquad &P_{\alpha_1}=5 \times 2^{\alpha_1} &\text{ and }
  N_{\alpha_2}=4 \times 2^{\alpha_2} &\text{ for MIMC}.
  \endaligned
\end{equation}

Figure~\ref{fig:mlmc-rates} shows the absolute expectation and
variance of the level differences for the different MLMC settings that
were outlined in Section~\ref{ss:mlmc}. These figures verify
Assumptions \eqref{eq:P-bias-assumption}, \eqref{eq:P-var-assumption}
and
\eqref{eq:MLMC-N-var-assumption}--\eqref{eq:MLMC-NP-var-assumption}
with the values $\sst=2$ and $\ssp=0$ for the $\overline \varphi$
sampler in \eqref{eq:MLMC-non-antithetic} or the value $\ssp=1$ for
the $\widehat \varphi$ sampler in \eqref{eq:MLMC-antithetic}. For the
same parameter values, Figure~\ref{fig:mimc-rates} provides numerical
evidence for Assumptions~\eqref{eq:MIMC-bias-assumption} and
\eqref{eq:MIP-var-assumption} for the $\widehat \varphi$ sampler
~\eqref{eq:MLMC-antithetic}.

We now compare the MLMC method \cite{giles:MLMC} in the setting that
was presented in Section~\ref{sec:mlmc-NP} and the MIMC method
\cite{abdullatif.etal:MultiIndexMC} that was presented in
Section~\ref{ss:mimc}. In both methods, we use the Milstein
time-stepping scheme and the partitioning sampler, $\widehat \varphi$,
in~\eqref{eq:MLMC-antithetic}. Recall that in this case, we verified
numerically that $\ggp=2$, $\ssp=1$ and $\sst=2$. We also use the MLMC
and MIMC algorithms that were outlined in their original work and use
an initial 25 samples on each level or multi-index to compute a
corresponding variance estimate that is required to compute the
optimal number of samples.  In the following, we refer to these
methods as simply ``MLMC'' and ``MIMC''.  We focus on the settings in
Sections~\ref{sec:mlmc-NP} and~\ref{ss:mimc} since checking the bias
of the estimator in those settings can be done straightforwardly by
checking the absolute value of the level differences in MLMC or the
multi-index differences in MIMC. On the other hand, checking the bias
in the settings outlined in Sections~\ref{ss:mc}, \ref{sec:mlmc-N}
and~\ref{sec:mlmc-P} is not as straightforward and determining the
number of times steps and/or the number of particles to satisfy a
certain error tolerance requires more sophisticated algorithms. This
makes a fair numerical comparison with these later settings somewhat
difficult.

Figure~\ref{fig:error-vs-tol}-\emph{left} shows the exact errors of
both MLMC and MIMC for different prescribed tolerances. This plot
shows that both methods estimate the quantity of interest up to the
same error tolerance; comparing their work complexity is thus fair. On
the other hand, Figure~\ref{fig:error-vs-tol}-\emph{right} is a PP
plot, i.e., a plot of the cumulative distribution function (CDF) of
the MLMC and MIMC estimators, normalized by their variance and shifted
by their mean, versus the CDF of a standard normal distribution. This
figure shows that our assumption in Section~\ref{sec:prob-set} of the
asymptotic normality of these estimators is well founded.
Figure~\ref{fig:maxlvl-vs-tol} shows the maximum discretization level
for both the number of time steps and the number of particles for MLMC
and MIMC (cf. \eqref{eq:PN-choices}). Recall that, for a fixed
tolerance in MIMC, $2 \alpha_2 + \alpha_1$ is bounded by a constant
(cf. \eqref{eq:mimc-optimal-set}).
Hence, Figure~\ref{fig:maxlvl-vs-tol} has a direct implication on the
results reported in Figure~\ref{fig:maxwork-vs-tol} where we plot the
maximum cost of the samples used in both MLMC and MIMC for different
tolerances. This cost represents an indivisible unit of simulation for
both methods, assuming we treat the simulation of the particle system
as a black box. Hence, Figure~\ref{fig:maxwork-vs-tol} shows that MIMC
has %
better parallelization scaling, i.e., even with an infinite number of
computation nodes MIMC would still be more efficient than MLMC.

Finally, we show in Figure~\ref{fig:work-vs-tol} the cost estimates of
MLMC and MIMC for different tolerances.
This
figure clearly shows the performance improvement of MIMC over MLMC and
shows that the complexity rates that we derived in this work are
reasonably accurate. %

\begin{figure}
  \includegraphics[scale=0.48]{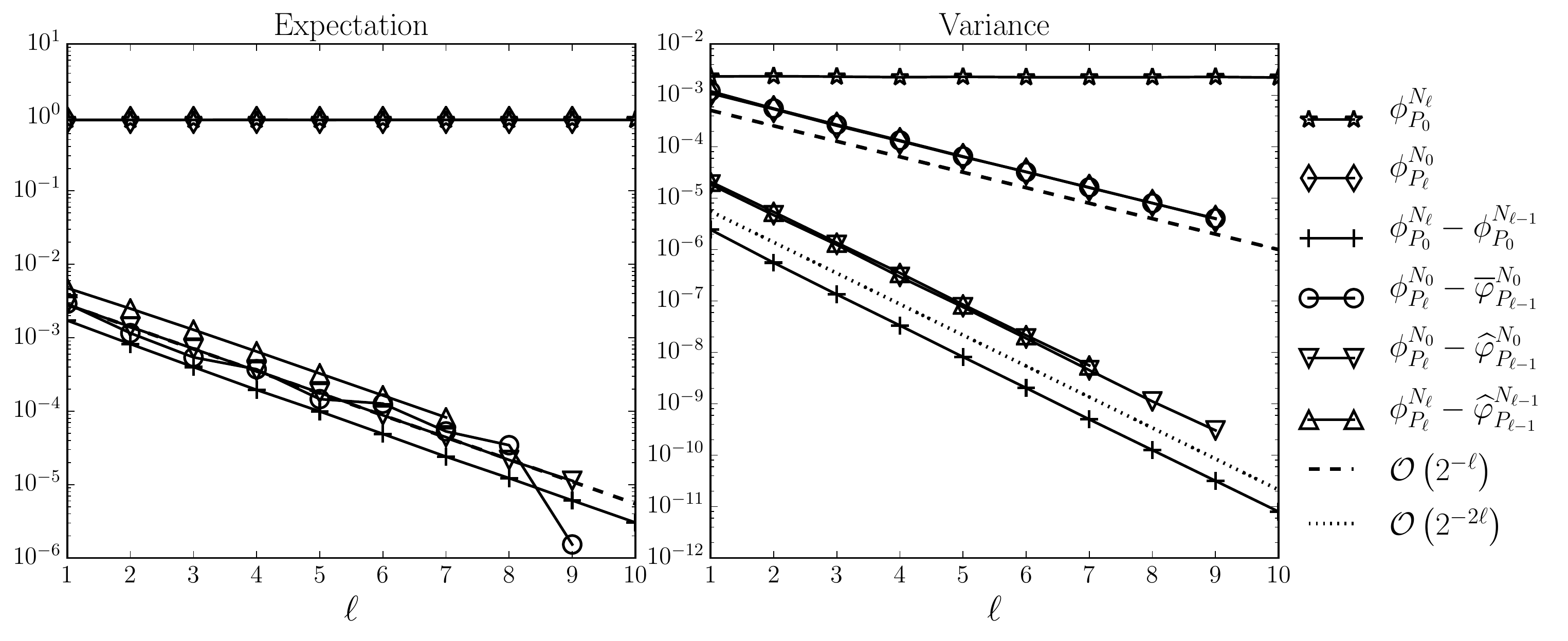}
  \centering
  \caption{ This plot shows, for the Kuramoto example \eqref{eq:kuramoto-system}, numerical
    evidence for Assumption~\eqref{eq:P-bias-assumption} (\emph{left})
    and Assumptions~\eqref{eq:P-var-assumption},
    \eqref{eq:MLMC-N-var-assumption}--\eqref{eq:MLMC-NP-var-assumption}
    (\emph{right}). Here, $P_{\ell}$ and $N_{\ell}$ are chosen
    according to \eqref{eq:PN-choices}. From the \emph{right} plot, we
    can confirm that $\sst=2$ for the Milstein method.  We can also
    deduce that using the $\overline \varphi$ sampler in
    \eqref{eq:MLMC-non-antithetic} yields $\ssp=0$ in
    \eqref{eq:MIP-var-assumption} (i.e., no variance reduction
    compared to $\var{\phi_{P_\ell}^{N_\ell}}$) while using the
    $\widehat \varphi$ sampler in \eqref{eq:MLMC-antithetic} yields
    $\ssp=1$ in \eqref{eq:MIP-var-assumption} (i.e. $\Order{P^{-2}}$).
  }
\label{fig:mlmc-rates}
\end{figure}

\begin{figure}
  \centering
  \includegraphics[scale=0.4, page=8]{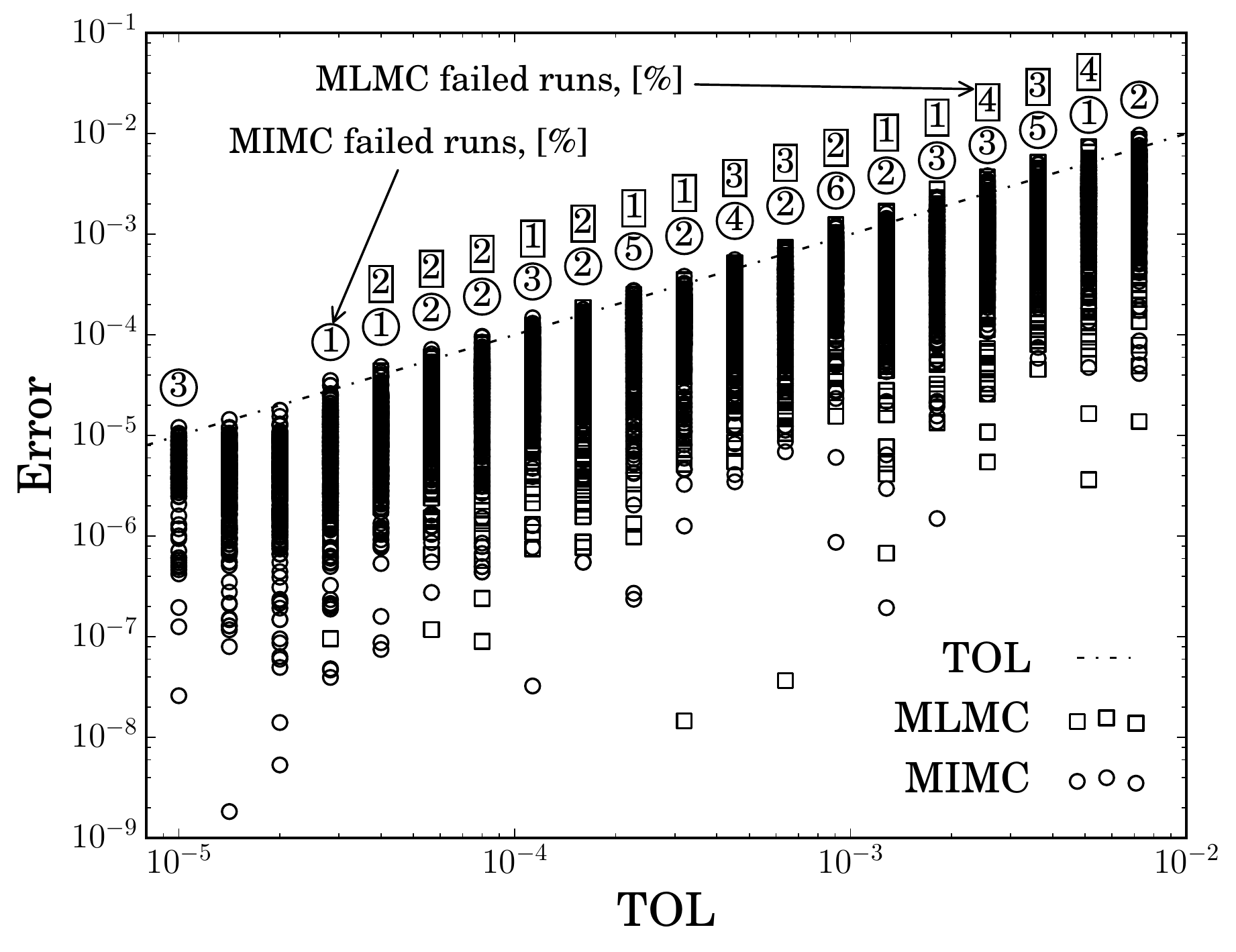}
  \includegraphics[scale=0.4, page=9]{mckean.pdf}
  \caption{This figure provide numerical evidence
    for~\eqref{eq:MIMC-bias-assumption} ({\em left}) and
    \eqref{eq:MIP-var-assumption} ({\em right}) for the Kuramoto
    example \eqref{eq:kuramoto-system} when using the Milstein scheme
    for time discretization (yielding $\sst=2$) and the partitioning
    sample of particle systems (yielding $\ssp=1$).  Here,
    $P_{\alpha_1}$ and $N_{\alpha_2}$ are chosen according to
    \eqref{eq:PN-choices}. When considering a mixed difference (i.e,
    $\valpha=(i,i)$), a higher rate of convergence is observed.}
\label{fig:mimc-rates}
\end{figure}

\begin{figure}
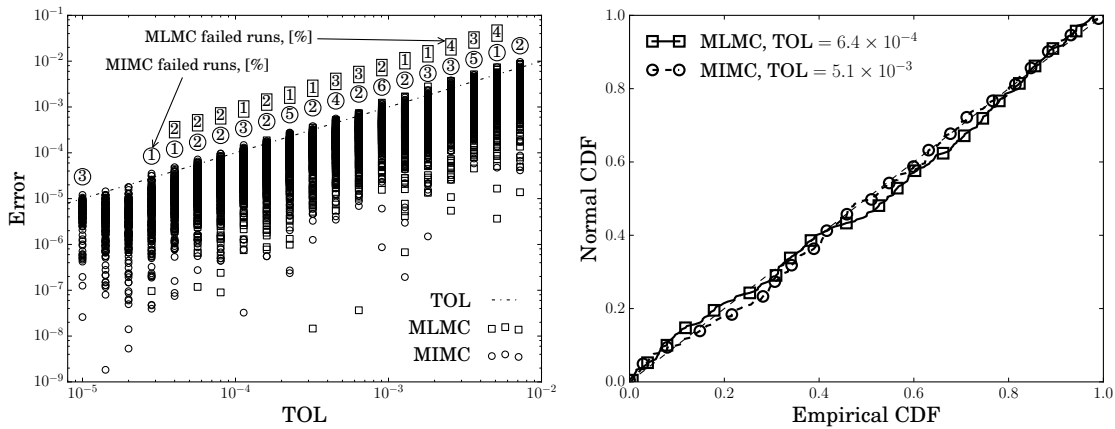

  \centering
  \includegraphics[scale=0.4, page=1]{mckean.pdf}
  \includegraphics[scale=0.4, page=2]{mckean.pdf}
  \caption{In these plots, each marker represents a separate run of
    the MLMC or MIMC estimators (as detailed in
    Sections~\ref{sec:mlmc-NP} and~\ref{ss:mimc}, respectively) when
    applied to the Kuramoto example \eqref{eq:kuramoto-system}.
    \emph{Left}: the exact errors of the estimators, estimated using a
    reference approximation that was computed with a very small
    $\tol$. This plot shows that, up to the prescribed 95\% confidence
    level, both methods approximate the quantity of interest to the
    same required tolerance, $\tol$. The upper and lower numbers above
    the linear line represent the percentage of runs that failed to
    meet the prescribed tolerance, if any, for both MLMC and MIMC,
    respectively \emph{Right}: A PP plot of the CDF of the value of
    both estimators for certain tolerances, shifted by their mean and
    scaled by their standard deviation showing that both estimators,
    when appropriately shifted and scaled, are well approximated by a
    standard normal random variable.}
\label{fig:error-vs-tol}
\end{figure}

\begin{figure}
  \centering
  \includegraphics[scale=0.4, page=3]{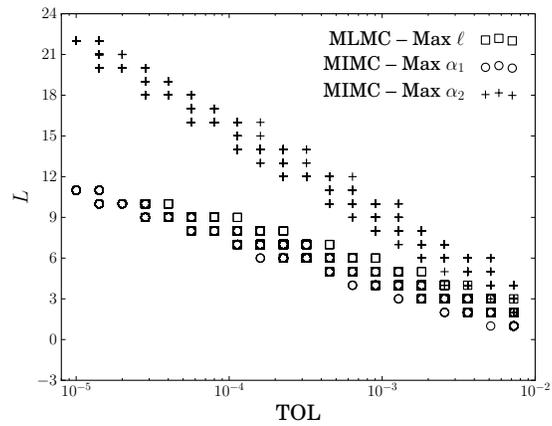}
  \caption{The maximum discretization level of the number of time
    steps and the number of particles for both MLMC and MIMC (as
    detailed in Sections~\ref{sec:mlmc-NP} and~\ref{ss:mimc},
    respectively) for different tolerances
    (cf. \eqref{eq:PN-choices}). Recall that, for a fixed tolerance in
    MIMC, $2 \alpha_2 + \alpha_1$ is bounded by a constant
    (cf. \eqref{eq:mimc-optimal-set}).}
  \label{fig:maxlvl-vs-tol}
\end{figure}

\begin{figure}
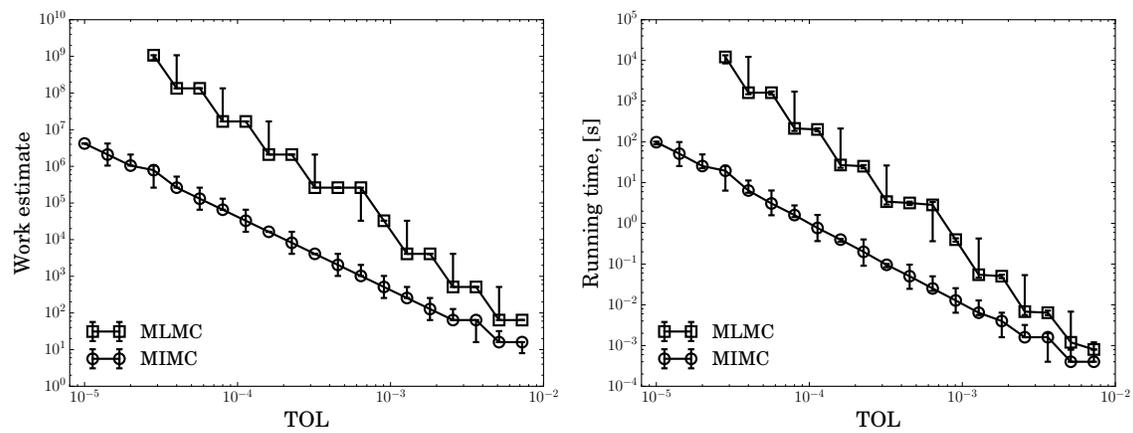

  \centering
  \includegraphics[scale=0.4, page=10]{mckean.pdf}
  \includegraphics[scale=0.4, page=11]{mckean.pdf}
  \caption{The maximum work estimate ({\em left}) and running time (in
    seconds, {\em right}) of the samples used in MLMC and MIMC (as
    detailed in Sections~\ref{sec:mlmc-NP} and~\ref{ss:mimc},
    respectively) when applied to the Kuramoto example
    \eqref{eq:kuramoto-system}.  These plots show the single
    indivisible work unit in MLMC and MIMC which gives an indication
    of %
    the parallelization scaling of both methods.}
  \label{fig:maxwork-vs-tol}
\end{figure}

\begin{figure}
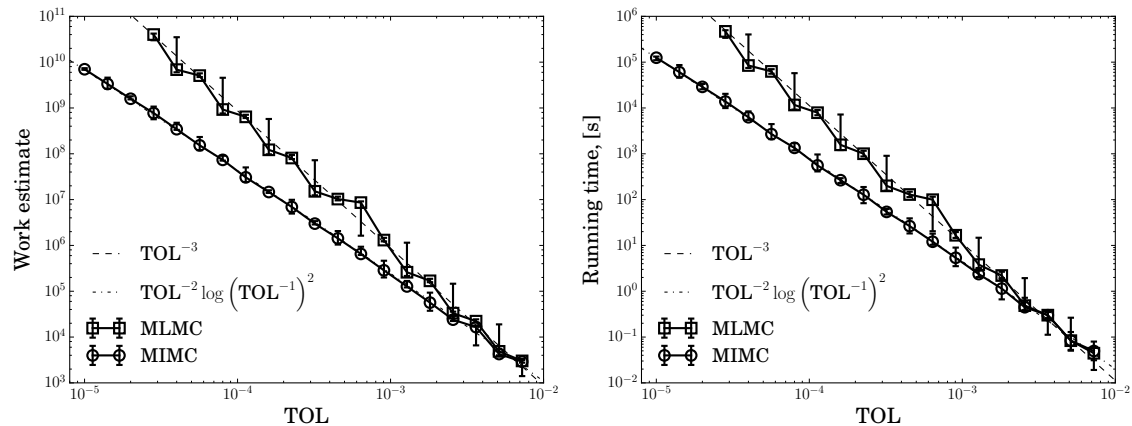

  \centering
\includegraphics[scale=0.4, page=4]{mckean.pdf}
\includegraphics[scale=0.4, page=5]{mckean.pdf}
\caption{Work estimate ({\em left}) and running time (in seconds, {\em
    right}) of MLMC  and MIMC
  (as detailed in
    Sections~\ref{sec:mlmc-NP} and~\ref{ss:mimc}, respectively) when
    applied to the Kuramoto
    example
  \eqref{eq:kuramoto-system}. For sufficiently small tolerances, the
  running time closely follows the predicted theoretical rates (also
  plotted) and shows the performance improvement of MIMC.
}
\label{fig:work-vs-tol}
\end{figure}

 \section{Conclusions}\label{s:conc}
 This work has shown both numerically and theoretically under certain
 assumptions, that could be verified numerically, the improvement of
 MIMC over MLMC when used to approximate a quantity of interest
 computed on a particle system as the number of particles goes to
 infinity. The application to other particle systems (or equivalently
 other McKean-Vlasov SDEs) is straightforward and similar improvements
 are expected. The same machinery was also suggested for approximating
 nested expectations in \cite{giles:acta} and the analysis here
 applies to that setting as well. Moreover, the same machinery, i.e.,
 multi-index structure with respect to time steps and number of
 particles coupled with a partitioning estimator, could be used to
 create control variates to reduce the computational cost of
 approximating quantities of interest on stochastic particle systems
 with a finite number of particles.

 Future work includes analyzing the optimal level separation
 parameters, $\betap$ and $\betat$, and the behavior of the tolerance
 splitting parameter, $\theta$. Another direction could be applying
 the MIMC method to higher-dimensional particle systems such as the
 crowd model in \cite{hajiali:msthesis}. On the theoretical side, the
 next step is to prove the assumptions that were postulated and
 verified numerically in this work for certain classes of particle
 systems, namely:
 the second order convergence with respect to the number of particles
 of the variance of the partitioning estimator
 \eqref{eq:MLMC-antithetic} and the convergence rates for mixed
 differences~\eqref{eq:MIMC-bias-assumption} and~
 \eqref{eq:MIP-var-assumption}.

 \section*{Acknowledgments}
R. Tempone is a member of the KAUST Strategic Research Initiative,
Center for Uncertainty Quantification in Computational Sciences and
Engineering. R. Tempone received support from the KAUST CRG3 Award
Ref: 2281 and the KAUST CRG4 Award Ref:2584.

The authors would like to thank Lukas Szpruch for the valuable
discussions regarding the theoretical foundations of the methods.
\bibliographystyle{acm}
 \end{document}